\newcommand\colo{\colon\,}
\newcommand{\nqed}{\renewcommand\qed{}}
\newenvironment{mylist}
  {\begin{enumerate}\setlength{\itemsep}{0mm}}{\end{enumerate}}
\theoremstyle{plain}
\newtheorem{theorem}{Theorem}
\newtheorem{proposition}[theorem]{Proposition}
\newtheorem{lemma}[theorem]{Lemma}
\newtheorem{corollary}[theorem]{Corollary}
\theoremstyle{definition}
\newtheorem{definition}[theorem]{Definition}
\newtheorem{remark}[theorem]{Remark}
\newcounter{listitem}
\newenvironment{thalphalist}{\begin{list}{\upshape(\alph{listitem})}
        {\setlength{\itemsep}{0mm}\usecounter{listitem}} }
    {\end{list}}
\newcounter{eqlist}\setcounter{eqlist}{0}
\newcommand{\Iff}{\Leftrightarrow}
\newcounter{to}\setcounter{to}{0}
\newenvironment{toenum}{\begin{enumerate}
        \setlength{\itemindent}{1.94em}
        \setcounter{enumi}{\value{to}}      

        \setlength{\itemsep}{0pt}}
{\setcounter{to}{\value{enumi}}\end{enumerate}}
\newcommand{\rto}[1]{($\to_{\ref{#1}}$)}
\newcommand\thitem[1]{\item[\ {\upshape (#1)}]}
\author{J\=anis C\={\i}rulis}
\address{University of Latvia, Riga, Latvis}
\title{IPLICATIONS IN SECTIONALLY PSEUDOCOMPLEMENTED POSETS}
\keywords{BCK-algebras, commutative groupoid, condition S, equational class, implication, relative pseudocomplement, residuation, sectional pseudocomplement}
\subjclass{Primary 03G25; secondary 06A12, 06D15, 06F35}
\begin{document}

\maketitle

\begin{abstract}
A sectionally pseudocomplemented poset $P$ is one which has the top element and in which every principal order filter is a pseudocomplemented poset. The sectional pseudocomplements give rise to an implication-like operation on $P$ which coincides with the relative pseudocomplementation if $P$ is relatively psudocomplemented. We characterise this operation and study some elementary properties of upper semilattices, lower semilattices and lattices equipped with this kind of implication. We deal also with a few weaker versions of implication.
Sectionally pseudocomplemented lattices have already been studied in the literature.
\end{abstract}

\maketitle

\section{Introduction}

This study is roused by the paper \cite{Ch} the subject of which is lattices with the largest element and pseudocomplemented upper sections (principal filters). Such a lattice $(L, \wedge, \vee, 1)$ admits a partial binary operation $*$ defined as follows:
\begin{equation}	\label{ast}
x * y = z \mbox{ if and only if } y \le x \mbox{ and } z \mbox{ is the pseudocomplement of } x \mbox{ in } [y) .
\end{equation}
Put in another way, this means that $x * y$ is defined if and only if $y \le x$, and
\begin{equation}	\label{ast1}
\mbox{if } x \in [y), \mbox{ then, for all } u \in [y), \
u \le x * y \mbox{ if and only if } u \wedge x = y .
\end{equation} 
In particular, if $y \le x$ and $z$ is the pseudocomplement of $x$ relatively to $y$, i.e., if
\[
\mbox{for all } u, \ u \le z  \mbox{ if and only if } u \wedge x = y .
\]
then $z = x * y$.
The algebra $(L, \wedge, \vee, *, 1)$ could be called a \emph{sectionally pseudocomplemented lattice}. 
The total binary operation $\to$ defined on $L$ by the condition
\begin{equation}	\label{to/*}
 x \to y := (x \vee y) * y
 \end{equation}
is, evidently, an extension of $*$.
Sectionally pseudocomplemented lattices and their extensions are explored further in \cite{ChR1,ChR2}. As noted in \cite[Remark 2.2]{ChR1}, the extension $(L,\vee,\wedge,\to,1)$ of a distributive sectionally pseudocomplemented lattice is a Brouwerian lattice (Heyting algebra). 

Another type of extension of the operation $*$:
\begin{equation}	\label{to1/*}
x \to y : = x * (x \wedge y)
\end{equation}
was investigated in \cite{ChH,HK}. (It should also be noted that meet semilattices with pseudocomplemented lower sections (principal ideals) have been studied already in 
\cite{K1,K2,MM}.)

A natural way to extend the notion of a pseudocomplementation to arbitrary posets has been discovered by several authors --- see \cite{K,V,H}. Correspondingly, sectional pseudocomplements can also be considered in posets that are not meet semilattices. By understanding pseudocomplements in (\ref{ast}) in this wider sense, we obtain, instead of (\ref{ast1}), the following characteristic condition for sectional pseudocomplementation $*$ in a poset 
 (we write $u \perp_y x$ to mean that there is no lower bound of $u$ and $x$ in $[y)$ distinct from $y$): 
\begin{equation}	\label{ast2}
\mbox{if } x \in [y), \mbox{ then, for all } u \in [y), \
u \le x * y \mbox{ if and only if } u \perp_y x .
\end{equation} 
This condition reduces to (\ref{ast1}) if the poset is a meet semilattice. 

The concept of pseudocomplementation may be further weakened in various ways. In this paper, we  consider posets (in particular, semilattices and lattices) with  weakened sectional pseudocomplementation (\ref{ast2}), as well as its extension allied to (\ref{to/*}).

\section{Extensions of sectionally pseudocomplemented posets}

Now suppose that  $(A,\vee,1)$ is a semilattice with unit, and $*$ is the operation defined by (\ref{ast2}).
The extension  of $*$ given by (\ref{to/*}) can, actually, be defined without an explicit reference to $*$. We first note that, in $A$, the condition $u \perp_y x$ actually means that $u \wedge x$ exists and equals to $y$. Indeed, if $y \le u,x$, then
\begin{eqnarray*}
u\, \perp_y\, x 
& \Iff & 
\forall v \in [y)\, (\mbox{if } v \le u, x, \mbox{ then } v = y)	\\
& \Iff & 
\forall w (\mbox{if } w \vee y \le u, x , \mbox{ then } w \vee y = y)	\\
& \Iff & 
\forall w (\mbox{if } w \le u, x, \mbox{ then } w \le y)	\\
& \Iff & 
\forall w (w \le u, x \mbox{ iff } w \le y). 
\end{eqnarray*}
Furthermore,
\[
z \le (x \vee y) * y  \Iff  
 z \vee y \le (x \vee y) * y \Iff  z \vee y\, \perp_y\, x \vee y
\] 
for all $z \in A$. These observations lead us to the following lemma.

\begin{lemma}	\label{L1}
A binary operation $\to$ on $A$ satisfies {\upshape(\ref{to/*})} if and only if, for all $x$, $y$ and $z$ in $A$,
\begin{equation}	\label{to/vee}
z \le x \to y = (z \vee y) \wedge (x \vee y) \mbox{ exists and equals to } y.
\end{equation}
\end{lemma}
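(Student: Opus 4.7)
The plan is to prove both implications as short chains of equivalences, leveraging directly the defining scheme (\ref{ast2}) for $*$ together with the string of equivalences displayed just before the lemma (which reinterpret $u\,\perp_y\,x$ as the existence and value of a meet, assuming $y \le u, x$).

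For the forward direction, assume $\to$ satisfies (\ref{to/*}), i.e., $x \to y = (x \vee y) * y$. I would first note that $(x \vee y) * y$ lies in $[y)$ by the definition of the sectional pseudocomplement, so $z \le x \to y$ is equivalent to $z \vee y \le (x \vee y) * y$. Applying (\ref{ast2}) to $x \vee y$ over $y$ converts this to $z \vee y\, \perp_y\, x \vee y$, and the calculation shown in the text just above the lemma then transforms it into the assertion that $(z \vee y) \wedge (x \vee y)$ exists and equals $y$, which is (\ref{to/vee}).

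For the converse, assume (\ref{to/vee}). Setting $z = y$ yields $y \le x \to y$, so $x \to y$ lies in $[y)$. Restricting (\ref{to/vee}) to arguments $z \in [y)$, where $z \vee y = z$, and reversing the same chain of equivalences, one sees that $x \to y$ is the largest element of $[y)$ whose meet with $x \vee y$ exists and equals $y$. By (\ref{ast2}), this is precisely the defining property of $(x \vee y) * y$, so $x \to y = (x \vee y) * y$ and (\ref{to/*}) holds; this also shows in passing that $(x \vee y) * y$ exists for every pair $x, y$.

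The only real obstacle is bookkeeping around the fact that $*$ is a priori a partial operation and that in (\ref{to/vee}) the argument $z$ ranges over all of $A$, not just over $[y)$. The key remark that smooths this out is that whenever $w \in [y)$, one has $z \le w$ iff $z \vee y \le w$, which lets one move uniformly between statements about arbitrary $z$ and statements about elements of $[y)$. Once that is in hand, the whole argument is a direct unfolding of the definitions, with no further ingredients required.
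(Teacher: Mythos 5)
Your argument is correct and follows essentially the same route as the paper, which derives the lemma directly from the two displayed chains of equivalences preceding it (the reinterpretation of $u\,\perp_y\,x$ as existence of the meet, and the passage from arbitrary $z$ to $z\vee y\in[y)$). Your additional bookkeeping about the partiality of $*$ and the down-set nature of the relevant subset of $[y)$ is sound and only makes explicit what the paper leaves implicit.
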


The next theorem shows that the operation $\to$ can be characterised even without  reference to join.

\begin{theorem} A binary operation $\to$ on $A$ satisfies {\upshape (\ref{to/*})} if and only if it  has the following properties:
\begin{toenum}
\item   \label{wexch}
if $x \le y \to z$, then $y \le x \to z$,   
\item 	\label{reg}
if $x \le x \to y$, then $x \le y$,
\item 	\label{comp}
if the meet of $x$ and $y$ exists, then $x \le y \to ( x \wedge y)$.	
\end{toenum}
\end{theorem}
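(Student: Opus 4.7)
My plan is to invoke Lemma~\ref{L1} and work with (\ref{to/vee}) in place of (\ref{to/*}). The forward direction reduces to three short checks against (\ref{to/vee}): \rto{wexch} is just the symmetry $a \wedge b = b \wedge a$ read inside (\ref{to/vee}); \rto{reg} follows because $x \le x \to y$ forces $(x \vee y) \wedge (x \vee y) = y$, hence $x \vee y = y$; and \rto{comp}, when $x \wedge y$ exists, reduces to the identity $(x \vee (x \wedge y)) \wedge (y \vee (x \wedge y)) = x \wedge y$, which is immediate after observing that both joins collapse to $x$ and $y$ respectively.

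For the converse the strategy is to derive (\ref{to/vee}) from \rto{wexch}--\rto{comp} in two stages. I would first prove the ``if'' half: given that $(z \vee y) \wedge (x \vee y)$ exists and equals $y$, axiom \rto{comp} applied to the pair $z \vee y$, $x \vee y$ yields $z \vee y \le (x \vee y) \to y$. Two successive applications of \rto{wexch} carry the resulting $z \le (x \vee y) \to y$ through $x \vee y \le z \to y$ (so $x \le z \to y$) and back to $z \le x \to y$. Specialising this newly proved implication to $z := y$, using $y \wedge (x \vee y) = y$, yields the useful corollary $y \le x \to y$ for all $x, y$.

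Then for the ``only if'' half, assume $z \le x \to y$ and let $w$ be an arbitrary common lower bound of $z \vee y$ and $x \vee y$; the goal is $w \le y$. Applying \rto{wexch} to $z \le x \to y$ gives $x \le z \to y$, which combined with the corollary $y \le z \to y$ yields $x \vee y \le z \to y$, so $w \le z \to y$. A second application of \rto{wexch} gives $z \le w \to y$, which combined with $y \le w \to y$ yields $z \vee y \le w \to y$, so $w \le w \to y$. Axiom \rto{reg} then forces $w \le y$, establishing that $y$ is indeed the meet of $z \vee y$ and $x \vee y$.

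I do not anticipate a serious obstacle. The linchpin is the corollary $y \le x \to y$, which bridges bounds on $x$ or $z$ with bounds on the joins $x \vee y$ or $z \vee y$; once that is in hand, the three axioms chain together naturally, with \rto{reg} providing the decisive last step.
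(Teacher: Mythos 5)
Your proof is correct and follows essentially the same route as the paper: both directions are reduced via Lemma~\ref{L1} to condition (\ref{to/vee}), with the forward direction a routine check and the converse built on \rto{comp} plus repeated use of \rto{wexch} and a final appeal to \rto{reg}. If anything, your converse is more complete than the paper's, which verifies the pseudocomplement property only for elements already in the section $[y)$ and dismisses the passage from $x\to y$ to $(x\vee y)\to y$ as obvious --- exactly the gap that your corollary $y \le x \to y$ and the two applications of \rto{wexch} close explicitly.
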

\begin{proof}
It is easily seen that \rto{wexch}, \rto{reg} 
and \rto{comp} hold for the operation $\to$ characterised by (\ref{to/vee}). 

Conversely, if the operation $\to$ satisfies 
the conditions \rto{wexch}--\rto{comp} and $*$ is its restriction defined by
\begin{equation}	\label{*/to}
x * y = v \mbox{ if and only if } y \le x \mbox{ and } v = x \to y,
\end{equation}
then, obviously,  (\ref{to/*}) holds true. Let us see, why $x * y$ is the pseudocomplementation of $x$ in $[y)$.
Suppose that $u,x \in [y)$. If $u \le x * y$ and 
$y \le v \le u,x$, then $v \le u \le x \to y$ and, furthermore, $v \le x \le v \to y$ by \rto{wexch}, wherefrom $v \le y$, i.e. $v = y$ by \rto{reg}. 
If, conversely, $u \perp_y x$, then $y$ is the greatest lower bound of $x$ and $y$ in $[y)$ and (as noted at the beginning of the section) even in $L$, and then $z \le x \to y= x * y$ by \rto{comp}. So, (\ref{to/vee}) holds by Lemma \ref{L1}.
\end{proof}

Now let $(A,\to,1)$ be any algebra in which $A$ is a poset with 1 the greatest element and $\to$ is a binary operation  obeying the conditions \rto{wexch}--\rto{comp}. It is an implicative algebra in the sense of \cite{R}, for the relationship 
\begin{equation}	\label{le/to}
x \le y \mbox{ if and only if } x \to y = 1
\end{equation}
is an easy consequence of these conditions. Indeed, if  $1 \le x \to y$, then $x \le x \to y$, and the inequality $x \le y$ follows by \rto{reg}. Conversely, suppose that $x \le y$. By \rto{comp}, $1 \le y \to y$, and then $x  \le 1 \to y$ in virtue of \rto{wexch}.

We know from the proof of the theorem that $A$ is an extension of a sectionally pseudocomplemented poset. It follows from Corollary \ref{unique} below that this extension is completely determined by the underlying poset.
To remind that the characterised properties of $\to$ were based on (\ref{to/*}) rather than on (\ref{to1/*}), this kind of extension could even be termed a j-extension (`j' for `join').

These observations  motivate the following definition.

\begin{definition}
An algebra $(A,\to,1)$ satisfying \rto{wexch}--\rto{comp} is said to be a \emph{sectionally j-pseudocomplemented poset}. The operation $\to$ itself is called \emph{sectional j-pseudocomplementation}. 
\end{definition} 

Sectionally j-pseudocomplemented semilattices and lattices are defined similarly.

\section{Weak BCK*-algebras}

It turns out that many important properties of sectionally j-pseudocomplement\-ation actually do not depend of \rto{reg} and \rto{comp}. See Remark \ref{bckrem} for the motivation of the term `wBCK*-algebra' used in the subsequent definition.

\begin{definition}
A \emph{weak BCK*-algebra}, or just \emph{wBCK*-algebra}, is an implicative algebra $(A,\to,1)$, where  $\to$ satisfies \rto{wexch}.
A wBCK*-algebra is said to be \emph{weakly contractive} if it satisfies \rto{reg}.
\end{definition}

\begin{lemma}
In every wBCK*-algebra,
\begin{toenum}
\item $x \le (x \to y) \to y$,	\label{bck2}
\item if $x \le y$, then $y \to z \le x \to z$. \label{bck1-}   
\item   \label{ubound}
$y \le (x \to y) \to y$, 
\item   \label{expan}
$((x \to y) \to y) \to y = x \to y$,                  
\item $x \to x = 1$,	\label{refl}
\item   \label{h1}
$x \le y \to x$,    
\item $1 \to x = x$,	\label{Sreg}
\item $x \to 1= 1$.	\label{unit}
\end{toenum}
\end{lemma}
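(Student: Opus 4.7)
The plan is to deduce all eight items using only the hypothesis \rto{wexch} and the implicative equivalence \eqref{le/to}. Items (5) and (8) are immediate: $x\to x=1$ follows from $x\le x$ and $x\to 1=1$ follows from $x\le 1$, both via \eqref{le/to}.

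The remaining items rest on a handful of applications of \rto{wexch}. For (1), apply \rto{wexch} to the tautology $x\to y\le x\to y$ to obtain $x\le (x\to y)\to y$. For (6), use (5) to rewrite $y\le 1$ as $y\le x\to x$, and apply \rto{wexch} to conclude $x\le y\to x$; item (3) is then the instance of (6) with $x$ replaced by $x\to y$. For (7), the inequality $x\le 1\to x$ is the special case $y=1$ of (6), while $1\to x\le x$ comes from applying \rto{wexch} to the tautology $1\to x\le 1\to x$, which yields $1\le (1\to x)\to x$, and then invoking \eqref{le/to}. For (2), I would chain \rto{wexch} twice: the tautology $y\to z\le y\to z$ gives $y\le (y\to z)\to z$, then transitivity with the hypothesis $x\le y$ and a second application of \rto{wexch} produce $y\to z\le x\to z$.

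Item (4) is the combination of two inequalities: $x\to y\le ((x\to y)\to y)\to y$ is a direct instance of (1) with $x\to y$ in place of $x$, and the reverse inequality is obtained by applying the antitonicity (2) to the instance $x\le (x\to y)\to y$ of (1); antisymmetry then yields the stated equality. No step is individually hard; the only care needed is bookkeeping — arranging the items in an order so that each is derived from \rto{wexch}, \eqref{le/to}, and previously established items only. A workable order is (5), (8), (1), (6), (3), (7), (2), (4), which departs slightly from the order in the statement.
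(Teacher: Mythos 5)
Your proposal is correct and follows essentially the same route as the paper: every item is reduced to \rto{wexch} together with the implicative-algebra equivalence (\ref{le/to}), with (1) obtained by applying \rto{wexch} to $x\to y\le x\to y$, (2) by chaining (1) with \rto{wexch}, and (4) by combining (1) with the antitonicity (2). The only differences are trivial bookkeeping ones — you derive (3) as an instance of (6) and (8) directly from (\ref{le/to}), where the paper gets them from \rto{wexch} and from (6) respectively — so no further comment is needed.
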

\begin{proof}
\rto{bck2} Trivially.
\par
\rto{bck1-} By \rto{bck2} and \rto{wexch}.
\par
\rto{ubound} Similarly.
\par
\rto{expan}
By \rto{bck2} and \rto{bck1-}, $((x \le y) \to y) \to y \le x \to y$. The converse inequality is a particular case of  \rto{bck2}. 
\par
\rto{refl} Follows from (\ref{le/to}).
\par
\rto{h1}
By \rto{wexch} and \rto{refl}, as $y \le 1 = x \to x$. \par
\rto{Sreg} 
By (\ref{le/to}), the inequality $1\to x \le x$ follows from \rto{bck2}. 
Its converse is a particular case of \rto{h1}.
\par
\rto{unit} Follows from \rto{h1}.
\end{proof}

Now it can be shown that the structure of every wBCK*-algebra is completely determined by the structure of its sections. In particular, a sectionally pseudocomplemented poset admits at most one wBCK*-algebra extension.

\begin{lemma} \label{unique}
Suppose that $(A,\to,1)$ is a wBCK*-algebra and that $*$ is the restriction  of $\to$ determined by {\upshape (\ref{*/to})}. Then 
\[
x \to y = \max\{z * y\colo x,y \le z\}.
\]	
\end{lemma}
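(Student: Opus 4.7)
My plan is to verify two things: (i) every element of the set $S = \{z * y : x, y \le z\}$ is bounded above by $x \to y$, and (ii) $x \to y$ itself belongs to $S$.

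For (i), note that by the definition of $*$ via (\ref{*/to}), for any $z$ with $x,y \le z$ we have $z * y = z \to y$ (this makes sense because $y \le z$). Since $x \le z$, the antitonicity property \rto{bck1-} gives $z \to y \le x \to y$. Thus $x \to y$ is indeed an upper bound of $S$.

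For (ii), the natural candidate witness is $z := (x \to y) \to y$. By \rto{bck2} we have $x \le (x \to y) \to y = z$, and by \rto{ubound} we also have $y \le z$, so $z$ is a legitimate element of the index set. Then by \rto{expan},
\[
z * y = z \to y = ((x \to y) \to y) \to y = x \to y.
\]
Hence $x \to y \in S$, and combined with (i) this shows $x \to y = \max S$.

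The two main ingredients are essentially already packaged in the preceding lemma: antitonicity in the left argument gives the upper bound, and the ``expansion'' identity \rto{expan} provides the explicit witness. There is no real obstacle here — the only slightly subtle point is recognising that $(x \to y) \to y$ is simultaneously an upper bound of $\{x,y\}$ (so that it qualifies as an index $z$) and is fixed by the map $w \mapsto w \to y$ when applied once more, which is exactly what \rto{expan} asserts.
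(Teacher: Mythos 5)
Your proof is correct and follows essentially the same route as the paper's: both use the witness $z = (x\to y)\to y$, justified via \rto{bck2} and \rto{ubound}, apply \rto{expan} to see that $z * y = x \to y$, and use the antitonicity \rto{bck1-} to show $x\to y$ bounds the set from above. The only difference is cosmetic ordering of the two halves of the argument.
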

\begin{proof}
Let $z := (x \to y) \to y$. Then $x,y \le z$ by \rto{bck2} and \rto{ubound}, and further $x \to y = z \to y = z * y$ by \rto{expan} and (\ref{*/to}). On the other hand, if $x,y \le z$, then $z * y = z \to y \le x \to y$ by \rto{bck1-}.
\end{proof}

\begin{remark} \label{bckrem}
It follows from \rto{bck2} and \rto{bck1-} that $y \le (y \to z) \to z \le x \to z$ whenever $x \le y \to z$. Therefore, this pair of conditions is equivalent to the   axiom \rto{wexch} of wBCK*-algebras. The latter term is motivated by this observation: if the algebra $(A,\to,1)$ satisfies  (\ref{le/to}), \rto{bck2} and the following strengthening of \rto{bck1-}
\[
x \to y \le (y \to z) \to (x \to z), \label{bck1}
\]
then it is the dual algebra (w.r.t.\ the ordering $\le$) of a BCK-algebra (see, e.g., \cite{IT}). We adopt the asterick notation BCK* for such duals from  \cite{Tor}. 
\end{remark}

A Hilbert algebra can be characterised as a positive implicative BCK*-algebra, i.e., a BCK*-algebra in which
\[
x \to (y \to z) \le (x \to y) \to (x \to z);
\]
see \cite{Co1,Tor,Ko}. The latter condition may be replaced by the identity
\[
x \to (x \to y) = x \to y
\]
(\cite[Theorem 8]{IT}; see also \cite[Theorem 1]{Tor}. Since this identity covers \rto{reg}, every Hilbert algebra is an example of a weakly contractive wBCK*-algebra. See Corollary \ref{wbckhilb} below for a stronger result.

A particular kind of Hilbert algebras are relatively pseudocomplemented posets \cite{Ru}:  algebras $(A,\to,1)$, where $A$ is a poset with 1 the maximum element and the operation $\to$ that satisfies the condition
\begin{equation}	\label{1+2}
\mbox{if } u \le x \to y \mbox{ and } v \le x,u, \mbox{ then } v \le y 
\end{equation}
as well as its converse---the following strengthening of \rto{comp}:
\begin{equation} \label{ccomp}
\mbox{if } v \le y \mbox{ whenever } v \le x,u, \mbox { then } u \le x \to y.
\end{equation}
In fact, (\ref{1+2}) is equivalent to its particular case
\begin{equation} \label{reg-}
\mbox{if } z \le x, z \le x \to y, \mbox{ then } z \le y.
\end{equation}
The subsequent lemma characterises the relation between sectionally j-pseudocom\-plemented and relatively pseudocomplemented posets more exactly.

\begin{theorem}
Let $A$ be a poset with the greatest element $1$. A binary operation $\to$ on $A$  is relative pseudocomplementation if and only if  it 
satisfies {\upshape\rto{wexch}, \rto{reg}} and \upshape{(\ref{ccomp})}.
\end{theorem}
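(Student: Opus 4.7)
The plan is to pivot on the observation, already recorded just above the statement, that (\ref{1+2}) is equivalent to its particular case (\ref{reg-}). Hence relative pseudocomplementation amounts to the conjunction of (\ref{reg-}) and (\ref{ccomp}), and since (\ref{ccomp}) appears on both sides of the biconditional to be proved, the whole task reduces to showing that, for a binary operation $\to$ on a poset with top, (\ref{reg-}) is equivalent to the conjunction of \rto{wexch} and \rto{reg}.

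For the forward direction, assume (\ref{reg-}) and (\ref{ccomp}). Condition \rto{reg} falls out as the instance $z=x$ of (\ref{reg-}). To verify \rto{wexch}, suppose $x \le y \to z$; by (\ref{ccomp}), the inequality $y \le x \to z$ will follow once I check that every common lower bound $v$ of $x$ and $y$ satisfies $v \le z$. But $v \le x \le y \to z$ together with $v \le y$ lets me invoke (\ref{reg-}), with $v$, $y$, $z$ playing the roles of $z$, $x$, $y$, to conclude $v \le z$.

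For the reverse direction, assume \rto{wexch}, \rto{reg} and (\ref{ccomp}); by the observation in the first paragraph it suffices to derive (\ref{reg-}). Suppose $z \le x$ and $z \le x \to y$. Applying \rto{wexch} to the second inequality yields $x \le z \to y$, and chaining with $z \le x$ gives $z \le z \to y$; now \rto{reg} forces $z \le y$, as desired.

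There is no real obstacle here: the content is simply that \rto{wexch} and \rto{reg} together are an economical substitute for the particular case (\ref{reg-}) of (\ref{1+2}), while (\ref{ccomp}) plays the same role on both sides. The only mild point to be careful about is bookkeeping of which variable plays which role when (\ref{reg-}) is applied inside the verification of \rto{wexch}.
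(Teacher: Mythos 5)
Your proof is correct and follows essentially the same route as the paper's: the paper likewise obtains \rto{reg} as an instance of (\ref{1+2}) with $u=v=x$, proves \rto{wexch} from (\ref{1+2}) together with (\ref{ccomp}), and derives (\ref{1+2}) from \rto{wexch} and \rto{reg} by exactly your chain $x \le z \to y$, hence $z \le z \to y$, hence $z \le y$ (the paper works with (\ref{1+2}) directly where you work with its equivalent particular case (\ref{reg-})). The only quibble is that your opening reduction slightly overstates itself---the passage from (\ref{reg-}) to \rto{wexch} requires (\ref{ccomp}), as your own argument in fact uses---but this does not affect correctness.
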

\begin{proof}
It was established in the proof of Lemma \ref{L1} that  
(\ref{1+2}) 
is a consequence of \rto{wexch} and \rto{reg}. 
On the other hand, the  conditions \rto{wexch} and \rto{reg} are fulfilled in every relatively pseudocomplemented poset. Indeed, \rto{reg} is a particular case of (\ref{1+2}) with $u=v=x$. To prove \rto{wexch}, assume that $x \le y \to z$. By (\ref{1+2}), then $v \le x,y$ implies that $v \le z$. Therefore $y \le x \to z$ by (\ref{ccomp}).
\end{proof}

We shall say that a wBCK*-algebra $A$ is an upper (lower) \emph{wBCK*-semilattice} or a \emph{wBCK*-lattice}, if $A$  happens to be an upper (lower) semilattice or a lattice, respectively. 
A Hilbert algebra with infimum \cite{FRS}, i.e., a lower semilattice-ordered Hilbert algebra, is an example of a weakly contractive lower wBCK*-semilattice. BCK*-semilattices and lattices have been studied in \cite{Idz}. Relatively pseudocomplemented semilattices, known also as Brouwerian or implicative semilattices, form a subclass of sectionally j-pseudocomplemented lower BCK*-semilattices. Likewise, relatively pseudocomplemented, or implicative,  lattices (Heyting algebras) form a subclass of BCK*-lattices.

\begin{theorem}
A sectionally j-pseudocomplemented lower semilattice (lattice) is relatively pseudocomplemented if and only if it satisfies the condition 
\begin{toenum}
\item	\label{toisot}
if $x \le y$, then $z \to x \le z \to y$ .
\end{toenum}
\end{theorem}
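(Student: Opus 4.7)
My plan is to reduce the theorem to the characterisation established just above, which states that a binary operation $\to$ is relative pseudocomplementation exactly when it satisfies \rto{wexch}, \rto{reg} and (\ref{ccomp}). Since sectional j\nobreakdash-pseudocomplementation already supplies \rto{wexch} and \rto{reg}, I only need to show that, in the presence of \rto{comp} and the lower-semilattice structure, the axiom (\ref{ccomp}) is equivalent to the isotonicity condition \rto{toisot}.

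For the ``only if'' direction I would start from a relatively pseudocomplemented poset and derive \rto{toisot}. Given $x \le y$, to show $z \to x \le z \to y$ I would apply (\ref{ccomp}) with the pair $z$, $z \to x$: it suffices that every $v$ below both $z$ and $z \to x$ lie below $y$. Such a $v$ is below $x$ by (\ref{reg-}), and $x \le y$ then finishes the job.

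For the converse, I would first simplify (\ref{ccomp}) using the meet: in a lower semilattice the universally quantified premise ``$v \le y$ whenever $v \le x,u$'' is just the single inequality $u \wedge x \le y$, so (\ref{ccomp}) becomes the implication $u \wedge x \le y \Rightarrow u \le x \to y$. Assuming \rto{toisot}, this now falls out in two steps: by \rto{comp} we have $u \le x \to (u \wedge x)$, and by \rto{toisot} applied to $u \wedge x \le y$ we get $x \to (u \wedge x) \le x \to y$; chaining the two yields $u \le x \to y$. The lattice case is handled by exactly the same argument, since the join is never invoked.

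The only step that needs a moment's care is the reduction of (\ref{ccomp}) to its meet form in the semilattice setting; after that the argument is essentially a one-liner in each direction, with \rto{comp} providing precisely the ``generic'' instance of (\ref{ccomp}) that \rto{toisot} is then used to propagate. I therefore do not expect any substantive obstacle.
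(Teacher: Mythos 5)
Your proposal is correct and follows essentially the same route as the paper: the forward direction derives \rto{toisot} from (\ref{ccomp}) and (\ref{reg-}) exactly as the paper indicates, and the converse invokes the preceding characterisation theorem, reduces (\ref{ccomp}) to its meet form $u \wedge x \le y \Rightarrow u \le x \to y$, and chains \rto{comp} with \rto{toisot} via $u \le x \to (u \wedge x) \le x \to y$ --- which is precisely the (truncated) conclusion of the paper's own argument.
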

\begin{proof}
Due to (\ref{ccomp}) and(\ref{reg-}), the condition (\ref{toisot}) is fulfilled in every relatively pseudocomplemented poset.
Now assume that a wBCK*-algebra $A$ is a lower semilattice satisfying (\ref{toisot}); 
by the previous theorem it suffices  to prove only that (\ref{ccomp}) holds. Let $v \le x,u$ implies, for every $v$, that $v \le y$. Then
$u \wedge x \le y$ and, by \rto{comp} and \rto{toisot}, \end{proof}

As \rto{toisot} holds in all BCK*-algebras (see \cite[Theorem 2]{IT}), we conclude that a lower BCK*-semilattice  is relatively pseudocomplemented (i.e., is an implicative semilattice) if and only if it is  sectionally j-pseudocomplemented.

\section{Weak BCK*-algebras with condition S}

Adapting the definition known for BCK-algebras (see, e.g., \cite{Is,IT}), we shall say that a wBCK*-algebra $A$ \emph{satisfies condition S} if, for all $x$ and $y$, the subset $\{z\colo x \le y \to z\}$ has the least element. 
We may denote it by $x \cdot y$; this way a binary operation $\cdot$ on $A$ may be introduced. The couple of operations $(\cdot, \to)$ illustrates the following definition.

\begin{definition}
By a \emph{(binary) adjunction} on a poset $A$ we mean a pair $(\cdot,\to)$ of binary operations on $A$ satisfying the condition 
\begin{equation}    \label{cdot/to}
x \le y \to z \mbox{ if and only if } xy \le z.
\end{equation}
\end{definition}

\begin{proposition}	\label{adj4}
An adjunction $(\cdot,\to)$ can equivalently be characterised by four conditions
\begin{thalphalist}
\thitem{a1}
$x \le y \to xy$,
\thitem{a2}
$(x \to y)x \le y$,
\thitem{a3}
if $x \le y$, then $z \to x \le z \to y$,
\thitem{a4}
if $x \le y$ then $xz \le yz$.
\end{thalphalist}
\end{proposition}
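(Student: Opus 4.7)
The plan is to recognise Proposition \ref{adj4} as the standard characterisation of an order adjunction (Galois connection) by its unit, counit, and the two monotonicity laws, and to prove the two directions separately by routine substitutions and transitivity arguments.

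For the direction from (\ref{cdot/to}) to the four conditions, I would extract each of (a1)--(a4) by specialising the biconditional $x\le y\to z\iff xy\le z$. Setting $z:=xy$ makes the right-hand inequality trivial and yields (a1). Substituting $x\to y$ in place of $x$ and $y$ in place of $z$ on the left gives the trivial $x\to y\le x\to y$, and the right-hand side then reads $(x\to y)x\le y$, which is (a2). For (a3), assume $x\le y$; applying (\ref{cdot/to}) to $z\to x\le z\to x$ gives $(z\to x)z\le x\le y$, and re-applying (\ref{cdot/to}) in the other direction yields $z\to x\le z\to y$. For (a4), assume $x\le y$; by (a1), $y\le z\to yz$, so $x\le z\to yz$ by transitivity, whence $xz\le yz$ by (\ref{cdot/to}).

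For the converse, I would show each implication of (\ref{cdot/to}) using exactly one monotonicity law and one of the unit/counit laws. Suppose $x\le y\to z$. Then (a4) gives $xy\le(y\to z)y$, and (a2) gives $(y\to z)y\le z$; chaining yields $xy\le z$. Suppose conversely that $xy\le z$. Then (a3) gives $y\to xy\le y\to z$, and (a1) gives $x\le y\to xy$; chaining yields $x\le y\to z$.

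No real obstacle is expected: the proposition is the textbook unit/counit-plus-monotonicity presentation of an adjunction, and both directions reduce to picking the right instance of the biconditional and a single transitivity step. The only point requiring a bit of care is keeping the two arguments of $\cdot$ in the correct order, since (\ref{cdot/to}) is asymmetric in its two variables, so (a1) and (a4) pair naturally (for deriving $xy\le z\Rightarrow x\le y\to z$) while (a2) and (a4) pair (for the other direction), rather than any crosswise combination.
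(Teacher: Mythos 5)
Your argument is correct; the paper actually states Proposition \ref{adj4} without any proof, so your write-up supplies the missing (and entirely standard) unit/counit-plus-monotonicity verification of a Galois connection, which is surely what the author had in mind. Both directions check out: (a1)--(a4) all follow from (\ref{cdot/to}) by the substitutions you indicate, and the two implications of (\ref{cdot/to}) follow by the transitivity chains you give. Two cosmetic slips: for (a2) the instance of (\ref{cdot/to}) you need is obtained by substituting $x\to y$ for $x$, $x$ for $y$, \emph{and} $y$ for $z$ (you only name two of the three substitutions, though the conclusion you state is the right one); and in your closing remark the pairings are misstated --- your own proof uses (a1) with (a3) to get $xy\le z\Rightarrow x\le y\to z$, and (a2) with (a4) for the converse, not the combinations you list there. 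Neither affects the validity of the proof.
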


Note that (a3) coincides with \rto{toisot}.

 \begin{theorem} \label{dual} 
 Suppose that $(\cdot,\to)$ is an adjunction on a poset $A$ and $1 \in A$.
 Then the following statements are equivalent:
\begin{thalphalist}
\item $(A,\to,1)$ is a wBCK*,
\item $(A,\cdot,1)$ is a commutative groupoid with the neutral element 1 which is also the largest element in $A$.
\end{thalphalist}
If it is the case, then the wBCK*-algebra is weakly contractive if and only if the groupoid is idempotent.
\end{theorem}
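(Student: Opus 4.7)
The plan is to translate between the two algebraic structures entirely through the adjunction (\ref{cdot/to}): commutativity of $\cdot$ mirrors the exchange axiom \rto{wexch} of $\to$, and neutrality of $1$ for $\cdot$ mirrors the identity $1\to x = x$ from \rto{Sreg}. There is no serious obstacle; the only thing to keep straight is which side of (\ref{cdot/to}) to read at each step.

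For the implication (a)$\Rightarrow$(b), I would first establish commutativity. Using (\ref{cdot/to}) twice together with \rto{wexch}, one has
\[
xy \le z \iff x \le y \to z \iff y \le x \to z \iff yx \le z
\]
for every $z$; setting $z := xy$ and $z := yx$ in turn forces $xy = yx$. For the neutral element I would invoke \rto{Sreg}, namely $1\to z = z$, so $x\cdot 1 \le z \iff x \le 1\to z \iff x \le z$, giving $x\cdot 1 = x$; commutativity then delivers $1\cdot x = x$. That $1$ is the greatest element of $A$ is already part of the implicative-algebra assumption in (a).

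For (b)$\Rightarrow$(a), I would first recover the characterisation (\ref{le/to}) of $\le$ by $\to$: because $1$ is the top element, $x\to y = 1$ means $1 \le x\to y$, which by (\ref{cdot/to}) and the neutrality of $1$ unfolds to $1\cdot x \le y$, i.e., $x \le y$. Hence $(A,\to,1)$ is an implicative algebra. The exchange axiom \rto{wexch} then follows from the same four-term chain displayed above, read in reverse, with the commutativity of $\cdot$ supplying the middle equivalence.

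Finally, the additional clause is a short calculation inside the adjunction. Since $x \le x$, the characterisation (\ref{le/to}) gives $x\to x = 1$, and applying (\ref{cdot/to}) to $x \le 1 = x\to x$ yields $x\cdot x \le x$ unconditionally. Idempotence therefore reduces to $x \le x\cdot x$, which by (\ref{cdot/to}) is just $x \le x\to (x\cdot x)$. The latter is trivially true (it corresponds via (\ref{cdot/to}) to $x\cdot x \le x\cdot x$), so if \rto{reg} holds it applies with $y := x\cdot x$ to yield $x \le x\cdot x$, hence idempotence. Conversely, if $x\cdot x = x$ and $x \le x\to y$, then (\ref{cdot/to}) gives $x\cdot x \le y$, so $x \le y$, which is \rto{reg}.
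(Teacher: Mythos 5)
Your proof is correct and takes essentially the same approach as the paper's: every claim is translated back and forth through the adjunction (\ref{cdot/to}), with commutativity of $\cdot$ matching \rto{wexch} and the neutrality of $1$ matching (\ref{le/to}), and the idempotence/weak-contractivity equivalence argued by the same two-line calculation. The only (harmless) difference is that you derive \rto{wexch} directly from commutativity of $\cdot$, whereas the paper reaches it indirectly by first verifying \rto{bck2} and \rto{bck1-} and invoking Remark \ref{bckrem}.
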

\begin{proof}
(a) $\to$ (b).
Commutativity of $\cdot$ follows from (\ref{cdot/to}) in virtue of \rto{wexch}. By \rto{unit}, $1$ is the maximum element in $A$. By \rto{refl} and (\ref{cdot/to}), $1\cdot x \le x$. At last, $x \le 1\cdot x$ by (a1) and (\ref{le/to}). 

(b) $\to$ (a). 
The two assumptions on $1$ provide (\ref{le/to}): 
\[
x \to y = 1 \Iff 1 \le x \to y \Iff  1\cdot x \le y \Iff x \le y \, .
\]
We know from Remark \ref{bckrem} that \rto{wexch} is a consequence of \rto{bck2} and \rto{bck1-}. 
The property (a2) together with commutativity of $\cdot$ allows us to prove \rto{bck2}. To obtain \rto{bck1-}, assume that $x \le y$.  As $\cdot$ is commutative, it follows from (a4) that $ux \le uy$ for all $u$. Then, for every $z$, $uy \le z$ implies that $ux \le z$. Therefore, $u \le y \to z$ implies that $u \le x \to z$. Hence, $y \to z \le x \to z$.

For the last assertion note that
\[
x \le x \to y \Iff x \cdot x \le y \Iff x \le y,
\] 
if $\cdot$ is idempotent, and that
\[
x \cdot x \le y \Iff x \le x \to y \Iff x \le y ,
\]
if $\to$ is weakly contractive (the condition ``if $x \le y$, then $x \le x \to y$'' inverse to \rto{reg} follows from \rto{h1}). Therefore, idempotency of $\cdot$ turns out to be equivalent to condition that $\to$ has to be weakly contarctive. 
\end{proof}

In virtue of (a4), if the operation $\cdot$ in an adjunction is commutative, then it gives rise to a partially ordered groupoid (po-groupoid). A commutative  po-groupoid is said to be \emph{integral}, if it has the neutral element which is also the maximum element, and \emph{residuated} if the multiplication $\cdot$ has the adjoint operation $\to$. We shall use the acronym \emph{pocrig} for a partially ordered commutative residuated and integral groupoid. Therefore, a pocrig can be viewed as an algebra of type $(A,\cdot,\to,1)$. A pocrig with  associative multiplication is known as a \emph{pocrim}; see \cite{BF,BR}.

\begin{corollary}
An algebra $(A,\to,1)$ is a wBCK*-algebra with condition S if and only if it is a reduct of a pocrig.
\end{corollary}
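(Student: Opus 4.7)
I will prove the corollary by establishing both implications, both grounded in Theorem~\ref{dual}.

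For the direction ``reduct of a pocrig $\Rightarrow$ wBCK*-algebra with condition S'': if $(A,\cdot,\to,1)$ is a pocrig, then $(\cdot,\to)$ is an adjunction on the poset $A$, $\cdot$ is commutative, and $1$ is simultaneously the neutral element for $\cdot$ and the top of $A$. Theorem~\ref{dual} then gives that $(A,\to,1)$ is a wBCK*-algebra. The adjunction $x \le y \to z \iff xy \le z$ makes $xy$ the least element of $\{z : x \le y \to z\}$, so condition S holds.

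For the converse, I would start with a wBCK*-algebra $(A,\to,1)$ satisfying condition S, define $x \cdot y := \min\{z : x \le y \to z\}$, and aim to show that $(A,\cdot,\to,1)$ is a pocrig. Commutativity of $\cdot$ is immediate from \rto{wexch}, which identifies the defining set with $\{z : y \le x \to z\}$, so $xy = yx$. By \rto{unit} the element $1$ is the top of $A$; and by \rto{Sreg} together with \eqref{le/to}, the set $\{z : 1 \le x \to z\}$ reduces to $\{z : x \le z\}$, whose minimum is $x$, so $1$ is neutral for $\cdot$. The remaining task is to check that $(\cdot,\to)$ is an adjunction, after which Theorem~\ref{dual} supplies the rest of the pocrig structure. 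I would invoke Proposition~\ref{adj4} and verify (a1)--(a4). Condition (a1) is built into the definition of $\cdot$. Condition (a2) comes from \rto{wexch} applied to the trivial $x \to y \le x \to y$: it places $y$ in $\{z : x \le (x \to y) \to z\}$, which gives $x(x \to y) \le y$. Condition (a4) follows from \rto{bck1-}: when $x \le y$, one has $\{w : z \le y \to w\} \subseteq \{w : z \le x \to w\}$ and hence $zx \le zy$. For (a3), assuming $x \le y$, I would use (a2) to obtain $(z \to x) z \le x \le y$, and then exploit the identification $z(z \to x) = \min\{w : z \le (z \to x) \to w\}$ together with condition S to conclude that $z \le (z \to x) \to y$; by \rto{wexch} this rearranges to $z \to x \le z \to y$.

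The main obstacle is condition (a3). In a general wBCK*-algebra the covariant monotonicity of $\to$ in its second slot is not a consequence of the axioms, and condition S must be used in a focused way to force it: the minimum characterisation of $z(z \to x)$ only yields that this minimum is below $y$, and extra care is required to parlay this into the statement that $y$ itself belongs to the relevant upper set.
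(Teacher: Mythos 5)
Your overall strategy is the same as the paper's: both directions are routed through Theorem \ref{dual}, with the multiplication of a wBCK*-algebra satisfying condition S defined as $x\cdot y:=\min\{z\colo x\le y\to z\}$. The forward direction and your verifications of (a1), (a2), (a4) and of commutativity and integrality are fine. The problem is (a3), and your own argument for it is circular: from $(z\to x)z\le x\le y$ you want to conclude $z\le(z\to x)\to y$, i.e.\ that $y$ belongs to the set $\{w\colo z\le(z\to x)\to w\}$. Condition S only guarantees that this set has a least element; the inference ``the least element is $\le y$, hence $y$ is in the set'' is exactly the upward-closedness of these sets, which is the nontrivial half of the adjunction (\ref{cdot/to}) that (a3) is supposed to deliver. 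So (a3) is being used to prove (a3). The paper itself supplies no argument here either --- it simply asserts, just before the definition of adjunction, that the pair $(\cdot,\to)$ arising from condition S ``illustrates'' that definition --- so you have put your finger on a step that the source glosses over rather than proves.

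The gap is not repairable from the wBCK* axioms alone. Consider the five-element poset with $0<w<b<1$, $0<a<1$ and $a$ incomparable to both $w$ and $b$; define $x\to y=1$ whenever $x\le y$, $1\to y=y$, $0\to y=1$, and $w\to 0=b\to 0=a$, $a\to 0=b$, $w\to a=b\to a=a$, $a\to w=b\to w=w$, $a\to b=b$. A direct check of \rto{bck2}, \rto{bck1-} and (\ref{le/to}) shows this is a wBCK*-algebra, and every set $\{z\colo x\le y\to z\}$ has a least element, so condition S holds; yet $\{z\colo a\le b\to z\}=\{0,a,b,1\}$ has least element $0$ while $w\notin\{z\colo a\le b\to z\}$, so this set is not an up-set, no operation adjoint to $\to$ can exist, and the algebra is not a reduct of a pocrig. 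What is really missing is isotonicity of $\to$ in its second argument, i.e.\ \rto{toisot} = (a3) of Proposition \ref{adj4}: it is a theorem of BCK*-algebras (whence the analogous statement for pocrims is correct) but not of wBCK*-algebras, and with it each set $\{z\colo x\le y\to z\}$ becomes a principal up-set and the adjunction follows at once. To make your proof (and the corollary) go through, condition S must be read as requiring these sets to be principal up-sets, or \rto{toisot} must be added as a hypothesis.
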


A similar correspondence between BCK*-algebras with condition S and pocrims has already be noticed in the literature; see, e.g. \cite{BR}. It should be noted that some authors include multiplication in the signature of BCK*-algebras with condition S; then the class of BCK*-algebras with condition S coincides with the class of pocrims.

The last assertion of Theorem \ref{dual} suggests that idempotent pocrigs and regular wBCK*-algebras with condition S should be related to each other in the same way. In fact, we can say more about this situation.

The next lemma (which slightly improves Lemma 4.1 in \cite{OK}) implies that such a wBCK*-algebra is even relatively pseudocomplemented (hence, a Hilbert algebra --- see the preceding section). 

\begin{lemma}
An idempotent pocrig $(A,\cdot,\to,1)$ is an implicative semilattice, i.e., $(A,\cdot,1)$ is a lower semilattice with unit, and $\to$ is relative pseudocomplementation on $A$.
\end{lemma}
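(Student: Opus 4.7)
The plan is to show that the multiplication $\cdot$ coincides with the meet on $A$; once this is established, the adjunction condition (\ref{cdot/to}) reads directly as the defining property of relative pseudocomplementation. There is essentially no real obstacle here: the key observation is that idempotency of $\cdot$, together with the monotonicity (a4) already implicit in the adjunction, is exactly what forces $\cdot$ to be the meet.

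First, I would check that $xy$ is a lower bound of both $x$ and $y$. Since $1$ is the largest element of $A$, $y \le 1$, so by (a4) of Proposition \ref{adj4} together with commutativity of $\cdot$ we have $xy = yx \le 1 \cdot x = x$, and symmetrically $xy \le y$. To show that $xy$ is the greatest common lower bound, suppose $z \le x, y$. Two applications of (a4) (the second using commutativity) yield
\[
z \cdot z \le x \cdot z \le x \cdot y,
\]
and idempotency of $\cdot$ collapses the left-hand side to $z$, giving $z \le xy$. Therefore $xy = x \wedge y$, and $(A,\cdot,1) = (A,\wedge,1)$ is a lower semilattice whose neutral and maximum element $1$ serves as the semilattice unit.

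Substituting $xy = x \wedge y$ in the adjunction (\ref{cdot/to}) then yields, for all $x, y, z \in A$,
\[
z \le x \to y \Iff x \wedge z \le y,
\]
which is exactly the statement that $x \to y$ is the pseudocomplement of $x$ relative to $y$ in the semilattice $A$. Hence $\to$ is relative pseudocomplementation, and $(A,\wedge,\to,1)$ is an implicative semilattice, as required.
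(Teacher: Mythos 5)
Your proposal is correct and follows essentially the same route as the paper: use isotonicity of $\cdot$ and $x,y\le 1$ to see $xy$ is a lower bound, use idempotency via $z=z\cdot z\le x\cdot z\le x\cdot y$ to see it is the greatest one, and then read off relative pseudocomplementation from the adjunction. No issues.
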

\begin{proof}
As multiplication in a pocrig is, by definition, isotone, it follows from $x,y \le 1$ that $x \cdot y$ is a lower bound of $x$ and $y$. Assume that $z \le x,y$; then $x \cdot z \le x \cdot y$ and $z = z \cdot z \le x \cdot z$. Therefore, $z \le x\cdot y$, and $x \cdot y$ is actually the greatest lower bound of $x$ and $y$. Then the adjoint $\to$ of $\cdot$ becomes relative pseudocomplementation on $A$. 
\end{proof}

\begin{corollary} 
An algebra $(A,\to,1)$ is a weakly contractive wBCK*-algebra with condition S if and only if it is a reduct of an implicative semilattice.
\end{corollary}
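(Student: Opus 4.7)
The plan is to chain together three results that have just been established: the earlier corollary characterising wBCK*-algebras with condition S as reducts of pocrigs, the last assertion of Theorem \ref{dual} which identifies weak contractiveness with idempotency of the pocrig multiplication, and the preceding lemma identifying idempotent pocrigs with implicative semilattices. Everything reduces to tracing an algebra through these correspondences in each direction.

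For the forward direction, I would start with a weakly contractive wBCK*-algebra $(A,\to,1)$ satisfying condition S. By the earlier corollary, it is the $\to$-reduct of a pocrig $(A,\cdot,\to,1)$, where $\cdot$ is the operation $x\cdot y$ assigning to $(x,y)$ the least element of $\{z \colo x \le y \to z\}$. By the last statement of Theorem \ref{dual}, weak contractiveness of $\to$ is equivalent to idempotency of the associated operation $\cdot$, so this pocrig is idempotent. The immediately preceding lemma then tells us that $(A,\cdot,1)$ is a lower semilattice with top element $1$ and $\to$ is the relative pseudocomplementation adjoint to $\wedge=\cdot$. Hence $(A,\to,1)$ is a reduct of the implicative semilattice $(A,\wedge,\to,1)$.

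For the converse, I would assume that $(A,\to,1)$ is the $\to$-reduct of an implicative semilattice $(A,\wedge,\to,1)$. Since meet is idempotent, commutative, has $1$ as neutral element (which is also the top), and $\wedge$ and $\to$ are adjoint in the sense of (\ref{cdot/to}), the algebra $(A,\wedge,\to,1)$ is an idempotent pocrig. Applying the earlier corollary in the reverse direction yields that $(A,\to,1)$ is a wBCK*-algebra with condition S, and applying the weakly-contractive/idempotent equivalence of Theorem \ref{dual} yields that this wBCK*-algebra is weakly contractive.

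I do not anticipate a genuine obstacle, since everything has been bundled into the cited lemma, theorem, and corollary. The only point requiring a little care is to make sure that in the reverse direction one really does have an adjunction in the sense of (\ref{cdot/to}) with $\cdot=\wedge$, so that the prior corollary applies; this is immediate from the defining property of the relative pseudocomplement, namely $z\le x\to y\Iff z\wedge x\le y$.
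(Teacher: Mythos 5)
Your proposal is correct and is essentially the argument the paper intends: the corollary is stated without proof precisely because it follows by chaining the pocrig-reduct corollary, the idempotency/weak-contractiveness equivalence in Theorem \ref{dual}, and the preceding lemma on idempotent pocrigs, exactly as you do. Your extra check that $\wedge$ and $\to$ form an adjunction in the sense of (\ref{cdot/to}) in the converse direction is the right point to verify and is handled correctly.
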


Hilbert algebras are just subreducts of implicative semilattices \cite[Theorem 12]{D1} (see also  Theorem 8 of \cite{Ho}). This gives us the following characteristic of those wBCK*-algebras that are Hilbert algebras.

\begin{corollary}	\label{wbckhilb}
A wBCK*-algebra is a Hilbert algebra if and only if it is a subalgebra of a weakly contractive wBCK*-algebra with condition S.
\end{corollary}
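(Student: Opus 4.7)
The plan is to obtain the corollary by composing Diego's theorem (Theorem 12 of \cite{D1}, cited just above the statement) with the preceding corollary. Diego's theorem identifies the class of Hilbert algebras with the class of $(\to,1)$-subreducts of implicative semilattices, while the preceding corollary identifies the $(\to,1)$-reducts of implicative semilattices with the weakly contractive wBCK*-algebras satisfying condition S. Stringing the two characterisations together is exactly what the corollary asks for.

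For the ``only if'' direction I would start with a Hilbert algebra $A$. Diego's theorem embeds $A$ as a $(\to,1)$-subreduct of some implicative semilattice $S$; the preceding corollary then turns the $(\to,1)$-reduct of $S$ into a weakly contractive wBCK*-algebra with condition S, and the embedding realises $A$ as a subalgebra of this reduct. For the converse, I would assume that $A$ is a subalgebra of such a wBCK*-algebra $B$. The preceding corollary tells us that $B$ is the $(\to,1)$-reduct of some implicative semilattice $S$, so $A$ becomes a $(\to,1)$-subreduct of $S$, and Diego's theorem then forces $A$ to be a Hilbert algebra.

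Since both ingredients are already available, there is no genuine obstacle to the argument; the only point to verify is a signature match. In the statement ``subalgebra'' refers to closure under $\to$ and membership of $1$, which is the signature shared by wBCK*-algebras and by the implicational fragment of an implicative semilattice used in Diego's theorem, so the two characterisations splice together without further work.
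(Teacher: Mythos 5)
Your proposal is correct and matches the paper's own (implicit) argument exactly: the corollary is obtained by splicing Diego's theorem (Hilbert algebras are the $(\to,1)$-subreducts of implicative semilattices) with the immediately preceding corollary (weakly contractive wBCK*-algebras with condition S are the $(\to,1)$-reducts of implicative semilattices). Nothing further is needed.
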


By a  \emph{multiplicative semilattice} we, following \cite{B}, shall mean an upper semilattice with multiplication which is both left and right distributive.

\begin{theorem}
 Suppose that $(\cdot,\to)$ is an adjunction on a poset $A$, $1 \in A$, and $\vee$ be a binary operation on $A$.
 Then the following statements are equivalent:
\begin{thalphalist}
\item $(A,\vee, \to,1)$ is a wBCK*-semilattice, 
\item $(A,\cdot,1)$ is a semilattice ordered commutative integral groupoid,
\item $(A,\vee,\cdot, 1)$ is an integral and commutative multiplicative semilattice.
\end{thalphalist}
\end{theorem}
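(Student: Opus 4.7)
The plan is to prove (a)$\Leftrightarrow$(b) by a direct appeal to Theorem \ref{dual}, and then to show (b)$\Leftrightarrow$(c) by exploiting the adjunction to pass between isotonicity and distributivity of $\cdot$ over $\vee$.

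For (a)$\Leftrightarrow$(b): Theorem \ref{dual} already establishes that, under the given adjunction hypothesis, $(A,\to,1)$ is a wBCK*-algebra if and only if $(A,\cdot,1)$ is a commutative integral groupoid with $1$ both neutral and maximum. The additional ingredient in both (a) and (b) is precisely that the underlying poset is an upper semilattice with join $\vee$; this data is identical on the two sides, so the equivalence is obtained by tacking the semilattice requirement onto Theorem \ref{dual}. I would present this in one short paragraph.

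For (b)$\Rightarrow$(c): the main work is showing left distributivity $x\cdot(y\vee z)=(x\cdot y)\vee(x\cdot z)$; right distributivity then follows from commutativity. The inequality $(x\cdot y)\vee(x\cdot z)\le x\cdot(y\vee z)$ is immediate from isotonicity of $\cdot$ (property (a4) in Proposition \ref{adj4}). For the reverse direction, set $w:=(x\cdot y)\vee(x\cdot z)$; using (\ref{cdot/to}) twice yields $y\le x\to w$ and $z\le x\to w$, whence $y\vee z\le x\to w$ and therefore $x\cdot(y\vee z)\le w$ by (\ref{cdot/to}) once more. This is the standard residuation argument and is the most technical step, though not really an obstacle.

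For (c)$\Rightarrow$(b): distributivity of $\cdot$ over $\vee$ automatically delivers isotonicity of $\cdot$, since $x\le y$ implies $x\vee y=y$, hence $(x\cdot z)\vee(y\cdot z)=(x\vee y)\cdot z=y\cdot z$, so $x\cdot z\le y\cdot z$ (and similarly on the other side). The remaining content of (b)---commutativity, integrality, and having a semilattice-ordered structure---is already part of (c). Hence (b) holds.

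I do not anticipate a genuine obstacle; the proof is a bookkeeping exercise built around Theorem \ref{dual} and the standard residuation computation showing that an adjoint multiplication preserves existing joins. The only care needed is to keep straight which pieces of data (the groupoid operation $\cdot$, the implication $\to$, the join $\vee$, and the constant $1$) are being asserted in each of the three formulations.
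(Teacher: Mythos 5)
Your proposal is correct and follows essentially the same route as the paper: the (a)$\Leftrightarrow$(b) equivalence is delegated to Theorem \ref{dual}, and distributivity of $\cdot$ over $\vee$ is obtained by the standard residuation computation (the paper phrases it as ``$(x\vee y)z\le u$ iff $xz\vee yz\le u$ for all $u$'', which is the same argument as your two-inequality version). Your explicit treatment of (c)$\Rightarrow$(b) via distributivity implying isotonicity is a small amount of bookkeeping the paper leaves implicit, but it does not constitute a different approach.
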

\begin{proof}
In virtue of Theorem \ref{dual}, it remains to show that a semilattice ordered pocrig is distributive. For all $u \in A$,
\begin{eqnarray*}
(x \vee y)z \le u & \Iff & x \vee y \le z \to u \\
 & \Iff & x \le z \to u \mbox{ and } y \le z \to u	\\
 & \Iff & xz \le u \mbox{ and } yx \le u	\\
 & \Iff & xz \vee yz \le u. \quad \qed
   \end{eqnarray*}
   \nqed
\end{proof}

\begin{corollary}
An algebra $(A, \vee, \to, 1)$ is an upper wBCK*-semilattice with condition S  if and only if it is a reduct of a semilattice ordered pocrig or, equivalently, of a residuated integral multiplicative semilattice.
\end{corollary}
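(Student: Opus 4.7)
The plan is to deduce this corollary directly from the preceding theorem, using the observation that condition S is precisely what is needed to upgrade the implication $\to$ to a full adjunction $(\cdot,\to)$.

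For the forward direction, suppose $(A,\vee,\to,1)$ is an upper wBCK*-semilattice with condition S. I would define $x\cdot y$ as the least element of $\{z\colo x\le y\to z\}$, which exists by condition S. The defining property yields the equivalence $xy\le z \Iff x\le y\to z$, so $(\cdot,\to)$ is an adjunction on $A$. Now the preceding theorem applies: statement (a) of that theorem holds for $(A,\vee,\to,1)$, so statements (b) and (c) hold as well. In particular, $(A,\vee,\cdot,1)$ is an integral commutative multiplicative semilattice, and combining multiplication with its adjoint gives the semilattice ordered pocrig $(A,\vee,\cdot,\to,1)$, of which the original algebra is clearly a reduct.

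For the converse, suppose $(A,\vee,\to,1)$ is the $(\vee,\to,1)$-reduct of a semilattice ordered pocrig $(A,\vee,\cdot,\to,1)$. By the preceding theorem (direction (b) or (c) to (a)), $(A,\vee,\to,1)$ is an upper wBCK*-semilattice. Condition S is then immediate: by the adjunction property $xy\le z\Iff x\le y\to z$, the element $xy$ is exactly the least element of $\{z\colo x\le y\to z\}$. The equivalence between ``reduct of a semilattice ordered pocrig'' and ``reduct of a residuated integral multiplicative semilattice'' is part of the preceding theorem's equivalence of (b) and (c).

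There is no real obstacle here; the corollary is essentially a packaging of the previous theorem together with the definition of condition S. The only thing that requires a moment's care is making sure that in both directions the multiplication one produces (or is given) is the same operation that witnesses condition S, but this is forced by the uniqueness of adjoints.
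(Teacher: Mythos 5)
Your argument is correct and matches the paper's own (essentially unstated) proof: the corollary is just the preceding theorem combined with the observation that condition S produces the adjoint multiplication $x\cdot y=\min\{z\colo x\le y\to z\}$ and, conversely, that an adjoint multiplication witnesses condition S. The one step you pass over quickly --- that this minimum really satisfies the full equivalence $xy\le z \Iff x\le y\to z$, whose ``if'' direction needs isotonicity of $\to$ in its second argument --- is passed over just as quickly by the paper itself, so your route and the paper's coincide.
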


\section{Some equational classes of expanded wBCK*-algebras}

It is well-known \cite{W} that the class of BCK*-algebras is not a variety. As the condition \rto{bck1} is, due to (\ref{le/to}), essentially an equation, this remains true also for wBCK*-algebras. 
The situation changes when join or meet operation is added.

\begin{theorem}	\label{uppvar}
Let $(A,\vee,1)$ be a semilattice with unit and the natural ordering $\le$, and let $\to$ be a binary operation on $A$. Then the following statements are equivalent:
\begin{thalphalist}
\item $(A,\to,1)$ is a wBCK*-algebra,
\item $\to$ satisfies the conditions {\upshape \rto{bck2}, \rto{Sreg}} and 
\begin{mylist} 
\thitem{b1} $x \to (x \vee y) = 1$,
\thitem{b2} $(x \vee y) \to z \le y \to z$.
\end{mylist}
\end{thalphalist}
\end{theorem}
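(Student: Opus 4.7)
The plan is to verify both implications by translating between the axioms, leaning heavily on the properties of wBCK*-algebras already collected in the lemma of Section 3.

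For (a) $\Rightarrow$ (b): Conditions \rto{bck2} and \rto{Sreg} are established as items \rto{bck2} and \rto{Sreg} of that lemma, so they carry over at once. For (b1), I would use that $x \le x \vee y$ in the natural ordering of the semilattice, so (\ref{le/to}) gives $x \to (x \vee y) = 1$. For (b2), the inequality $y \le x \vee y$ together with property \rto{bck1-} of the lemma yields $(x \vee y) \to z \le y \to z$.

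For (b) $\Rightarrow$ (a): I need to recover (\ref{le/to}) and \rto{wexch}. To get (\ref{le/to}): if $x \le y$, then $x \vee y = y$ and (b1) directly gives $x \to y = 1$; conversely, if $x \to y = 1$, then by \rto{bck2} and \rto{Sreg},
\[
x \le (x \to y) \to y = 1 \to y = y.
\]
Next, I would extract \rto{bck1-} from (b2): given $a \le b$, the identity $b \vee a = b$ allows us to specialise (b2) (with $x := b$, $y := a$) to $b \to z = (b \vee a) \to z \le a \to z$. Finally \rto{wexch} follows by a two-step transitivity: if $x \le y \to z$, then \rto{bck1-} gives $(y \to z) \to z \le x \to z$, while \rto{bck2} supplies $y \le (y \to z) \to z$, so $y \le x \to z$.

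The argument is essentially a piece of bookkeeping, since every ingredient is either one of the four stipulated conditions in (b) or already a lemma in Section 3. The one step that is not completely immediate is deriving \rto{bck1-} from (b2); once that little symmetry trick (collapsing $b \vee a$ to $b$) is in place, the rest of the implication is just chaining inequalities. I do not anticipate any genuine obstacle beyond recognising this reduction.
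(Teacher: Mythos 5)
Your proof is correct and follows essentially the same route as the paper: derive (\ref{le/to}) from \rto{bck2}, \rto{Sreg} and (b1), recover \rto{bck1-} from (b2) via the semilattice identity $x\vee y=y$ for $x\le y$, and then obtain \rto{wexch} from \rto{bck2} and \rto{bck1-} as in Remark \ref{bckrem}. The paper's proof is just a terser statement of the same bookkeeping.
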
 
\begin{proof}
Evidently, every wBCK*-semilattice satisfies the conditions listed in (b) --- see (\ref{le/to}) and \rto{bck1-}. If, conversely, the conditions are satisfied in $(A, \vee, \to,1)$, then the order relation $\le$ satisfies (\ref{le/to}) in virtue of \rto{bck2}, \rto{Sreg} and (b1), and then \rto{bck1-} follows from (b2) by (\ref{le/to}).
\end{proof}

The next theorem is proved similarly.

\begin{theorem}	\label{lowvar}
Let $(A,\wedge,1)$ be a lower semilattice with unit and the natural ordering $\le$, and let $\to$ be a binary operation on $A$. Then the following statements are equivalent:
\begin{thalphalist}
\item $(A,\to,1)$ is a wBCK*-algebra,
\item $\to$ satisfies the conditions {\upshape \rto{bck2}, \rto{Sreg}} and 
\begin{mylist} 
\thitem{b1} $(x \wedge y) \to y = 1$,
\thitem{b2} $x \to z \le (x \wedge y) \to z$.
\end{mylist}
\end{thalphalist}
\end{theorem}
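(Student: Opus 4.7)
The plan is to mirror the proof of Theorem \ref{uppvar} in dualized form, substituting meet-based reasoning for the join-based arguments there. Throughout, $1$ being the unit of $\wedge$ forces $1$ to be the greatest element of $A$ (since $x = x \wedge 1 \le 1$), which is used implicitly below.

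For the direction (a) $\Rightarrow$ (b), \rto{bck2} and \rto{Sreg} are already listed among the standard consequences of the wBCK*-axioms in the preceding lemma, so nothing further is required for them. The identity (b1) is immediate from $x \wedge y \le y$ together with (\ref{le/to}), and (b2) is exactly \rto{bck1-} applied to the inequality $x \wedge y \le x$.

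For (b) $\Rightarrow$ (a), I would first recover (\ref{le/to}). If $x \le y$, then $x \wedge y = x$, so (b1) becomes $x \to y = 1$; conversely, if $x \to y = 1$, then \rto{bck2} together with \rto{Sreg} gives $x \le (x \to y) \to y = 1 \to y = y$. Next, I would derive the antitonicity property \rto{bck1-}: for arbitrary $a \le b$, substituting $x := b$ and $y := a$ into (b2) and using $b \wedge a = a$ yields $b \to z \le a \to z$. Finally, as recorded in Remark \ref{bckrem}, the combination of \rto{bck2} and \rto{bck1-} is equivalent to the exchange axiom \rto{wexch}, so $(A,\to,1)$ is indeed a wBCK*-algebra. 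No step presents a genuine obstacle; the only thing to watch is the correct specialization of the variables in (b2) to extract antitonicity, and otherwise the argument is the order-theoretic dual of the proof of Theorem \ref{uppvar}.
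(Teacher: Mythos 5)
Your proposal is correct and is essentially the paper's own argument: the paper simply states that this theorem ``is proved similarly'' to Theorem \ref{uppvar}, i.e.\ one recovers (\ref{le/to}) from \rto{bck2}, \rto{Sreg} and (b1), deduces \rto{bck1-} from (b2), and invokes Remark \ref{bckrem} to obtain \rto{wexch} --- exactly the dualized steps you carry out.
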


\begin{corollary} 
The following classes of algebras are equationally definable:
\begin{thalphalist}
\item the class of all upper wBCK*-semilattices,
\item the class of all lower wBCK*-semilattices, as well as its subclasses of weakly contractive and of sectionally j-pseudocomplemented semilattices,
\item the class of all wBCK*-lattices, as well as its subclasses of weakly contractive and of sectionally j-pseudocomplemented lattices,
\item the class of all upper semilattice-ordered pocrigs,
\item the class of all lower semilattice-ordered pocrigs,
\item the class of all lattice-ordered pocrigs.
\end{thalphalist}
\end{corollary}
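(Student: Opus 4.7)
The plan is to exhibit, in each case, an equational axiomatization of the class in the indicated signature. The unifying device is the elementary observation that in any semilattice an inequality $s \le t$ between terms is equivalent to an equation: $s \vee t = t$ in an upper semilattice, $s \wedge t = s$ in a lower semilattice. All of the characterizing conditions encountered below are either outright equations or such inequalities; the only subtle point is to replace the occasional quasi-identity (``if $s \le t$ then $s' \le t'$'') by an equivalent inequality between terms.

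For parts (a), (b), and (c), I would invoke Theorems \ref{uppvar} and \ref{lowvar}. Their clause (b) lists \rto{bck2}, \rto{Sreg}, (b1), and (b2) as a characterization of a wBCK*-operation $\to$ on a semilattice; each of these is an equation or a term-inequality, hence equational in the semilattice signature. Case (c) follows by combining the two theorems in the lattice signature.

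For the weakly contractive subclasses in (b) and (c), I would replace \rto{reg} by the equivalent inequality $x \wedge (x \to y) \le y$. The forward direction uses \rto{bck1-}: setting $x' = x \wedge (x \to y)$, one has $x' \le x \to y \le x' \to y$, whence $x' \le y$ by \rto{reg}. The converse is immediate on substitution. For the sectionally j-pseudocomplemented subclasses, condition \rto{comp} reduces in a lower semilattice to the plain inequality $x \le y \to (x \wedge y)$, which is already equational.

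For parts (d), (e), and (f), I would augment the axiomatization obtained in (a)--(c) with the identities $x \cdot y = y \cdot x$, $1 \cdot x = x$, and (in the upper-semilattice case (d)) the distributive law $x \cdot (y \vee z) = x \cdot y \vee x \cdot z$, together with equations forcing $(\cdot, \to)$ to be an adjunction. By Proposition \ref{adj4}, the adjunction is captured by (a1)--(a4); clauses (a1), (a2) are inequalities, while the monotonicity clauses (a3) and (a4) admit the equational substitutes $z \to x \le z \to (x \vee y)$ and $x \cdot z \le (x \vee y) \cdot z$ in an upper semilattice (replace $\vee$ by $\wedge$ and adapt in the lower-semilattice case), since setting $y := x \vee y$ recovers the original quasi-identity and vice versa. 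The main point demanding real verification is precisely this last translation of monotonicity into an inequality between terms; once it is in hand, the rest of the proof is a bookkeeping list of explicit identities.
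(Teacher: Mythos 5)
Your proposal is correct and follows essentially the same route as the paper: parts (a)--(c) are reduced to Theorems \ref{uppvar} and \ref{lowvar} with \rto{reg} rewritten as the inequality $x \wedge (x \to y) \le y$ (your derivation via \rto{bck1-} is a trivial variant of the paper's via \rto{wexch}), and parts (d)--(f) rest on the observation that the conditions of Proposition \ref{adj4} become equations in a semilattice. You merely make explicit the standard translation of the monotonicity quasi-identities (a3), (a4) into term inequalities, which the paper leaves implicit.
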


\begin{proof}
(a) Follows from Theorem \ref{uppvar}.
\par
(b) Follows from Theorem \ref{lowvar}. Note that the condition \rto{reg} can be rewritten in a form of an equation as follows:
\begin{toenum}	
\item	\label{regg}
$x \wedge (x \to y) \le y$.
\end{toenum}
Indeed, \rto{reg} is an easy consequence of \rto{regg}. On the other hand, $x \wedge (x \to y) \le x \to y$, 
and then $x \le x \wedge (x \to y) \to y$ by \rto{wexch}. As $x \wedge (x \to y) \le x$, it follows by \rto{reg} that  $x \wedge (x \to y) \le y$.
\par
(c)
Follows from (a) and (b).
\par
(d),(e) Follow from (a) and (b) respectively, as in semilattices the four conditions listed in Proposition \ref{adj4} are captured by equations.
\par
(f) Follows from (d) and (e).
\end{proof}

For BCK*-semilattices and lattices this was proved by Idziak in  \cite[Theorem 1]{Idz}.
As noted in \cite{Idz}, the variety of upper BCK*-semilattices is neither congruence permutable nor congruence distributive. Clearly, this concerns also upper wBCK*-semi\-lattices. In contrast, the class of lower wBCK*-semilattices is even arithmetical; and so is the class of wBCK*-lattices. Our next theorem together with its proof generalises Theorem 2 of \cite{Idz}.

\begin{theorem}
The variety of lower wBCK*-semilattices is arithmetical.
\end{theorem}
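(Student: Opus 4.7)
The plan is to apply Pixley's theorem, which characterises arithmetical varieties as precisely those admitting a ternary term $p(x,y,z)$ satisfying the three identities $p(x,y,x) = p(x,y,y) = p(y,y,x) = x$. I would propose the explicit candidate
\[
p(x,y,z) \;:=\; \bigl((x \to y) \to z\bigr) \wedge \bigl((z \to y) \to x\bigr) \wedge \bigl((z \to x) \to x\bigr),
\]
formulated in the signature $(\wedge,\to,1)$ of a lower wBCK*-semilattice, and verify each of the three identities using the properties of $\to$ already established.

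Two of the three identities drop out quickly. For $p(x,y,y)$, the middle conjunct simplifies to $(y \to y) \to x = 1 \to x = x$ by \rto{refl} and \rto{Sreg}; the first conjunct $(x \to y) \to y$ dominates $x$ by \rto{bck2}; and the third conjunct $(y \to x) \to x$ dominates $x$ by \rto{h1}. So the meet is $x$. For $p(y,y,x)$, symmetrically, the first conjunct becomes $1 \to x = x$, while the other two both equal $(x \to y) \to y \ge x$ by \rto{bck2}, giving again $x$.

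The delicate identity is $p(x,y,x) = x$. There the first two conjuncts both collapse to $(x \to y) \to x$, and in a general wBCK*-algebra this expression is only known to dominate $x$ (by \rto{h1}), not to equal $x$; this is precisely why the naive two-conjunct guess $((x \to y) \to z) \wedge ((z \to y) \to x)$ fails to be a Pixley term. The third conjunct is designed exactly to cap the expression when $z = x$: at $z = x$ it becomes $(x \to x) \to x = 1 \to x = x$, which pulls the whole meet down to $x$. Once the term is in hand the verification amounts to the routine manipulations sketched above, using only \rto{refl}, \rto{Sreg}, \rto{bck2}, \rto{h1} and the meet-semilattice law $a \wedge b = a \Iff a \le b$. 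The real work of the proof, and its principal conceptual obstacle, is spotting the correct asymmetric third conjunct; the identities themselves are then immediate, and arithmeticity follows from Pixley's theorem.
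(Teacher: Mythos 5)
Your proposal is correct, and every step checks out against the identities already available in a wBCK*-algebra: in $p(x,y,y)$ the middle conjunct collapses to $1\to x=x$ by \rto{refl} and \rto{Sreg} while the other two dominate $x$ (by \rto{bck2} and by \rto{h1} or \rto{ubound}); in $p(y,y,x)$ the first conjunct collapses to $x$ and the other two equal $(x\to y)\to y\ge x$; and in $p(x,y,x)$ the extra conjunct $(z\to x)\to x$ specialises to $(x\to x)\to x=x$ and caps the meet. The route is genuinely, if mildly, different from the paper's: the paper splits arithmeticity into congruence distributivity plus congruence permutability, exhibiting the majority term $m(x,y,z)=((x\to y)\to y)\wedge((y\to z)\to z)\wedge((z\to x)\to x)$ and, separately, the Mal'cev term $((x\to y)\to z)\wedge((z\to y)\to x)$, whereas you package everything into a single Pixley (2/3-minority) term. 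Your term is exactly the paper's Mal'cev term met with one further conjunct, and your diagnosis of why that conjunct is needed --- that $(x\to y)\to x$ is only known to dominate $x$, so the bare Mal'cev term fails $p(x,y,x)=x$ --- is precisely right. The two arguments use the same small stock of identities and are of essentially equal length; the paper's version keeps the cyclically symmetric majority term and displays the two congruence properties separately, while yours economises to one term at the cost of an asymmetric construction.
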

\begin{proof}
The variety is congruence distributive, for it has a majority term
\[
m(x,y,z) := (x \to y. \to y) \wedge (y \to z. \to z) \wedge (z \to x. \to x)
\]
and congruence permutable, for it has a corresponding Mal'cev term
\[
p(x,y,z) := (x \to y. \to z) \wedge (z \to y. \to x)
\]
(see \rto{refl}, \rto{Sreg}, \rto{bck2}, \rto{ubound}). Hence, it is arithmetical.
\end{proof}

Of course, then all subvarieties varieties of lower wBCK*-semilattices and of wBCK*-lattices mentioned in the corollary are also arithmetical.
 
\begin{remark}
Sectionally j-pseudocomplemented lattices are just the j-extensions of sectionally pseudocomplemented lattices mentioned in Introduction. Another equational description of such extensions was presented in \cite[Theorem 2]{Ch}. It was stated in Theorems 5.1 and 5.3 of \cite{ChR1} that this variety is arithmetical and 1-regular. The easy proof of the latter 
theorem goes even for any variety of upper wBCK*-semilattices.
\end{remark}

The next theorem and its proof are suggested by the similar result \cite[Theorem 3]{Idz} for pocrims.

\begin{theorem}
The variety of uppersemilattice-ordered pocrigs is arithmetical.
\end{theorem}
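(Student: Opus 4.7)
The plan is to parallel the proof of the preceding theorem and exhibit an explicit Mal'cev term and an explicit majority term in the signature $(\vee,\cdot,\to,1)$; arithmeticity of the variety will then follow from the standard characterisation. Throughout I would use only the clauses of Proposition~\ref{adj4}, the neutrality and maximality of $1$, and the distributivity of $\cdot$ over $\vee$ implicit in the semilattice-ordered hypothesis.

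For the Mal'cev term I propose
\[
p(x,y,z) \;:=\; (y\to x)\cdot z \;\vee\; (y\to z)\cdot x .
\]
Using $x\to x=1$ and $1\cdot a=a$, the identity $p(x,x,z)=z$ reduces to $z\vee (x\to z)\cdot x=z$, which is immediate from Proposition~\ref{adj4}(a2); the identity $p(x,y,y)=x$ is symmetric.

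For the majority term the naive candidate $xy\vee yz\vee xz$ fails because pocrigs need not be idempotent. I would instead attach to each factor a syntactic ``equality weight'' that evaluates to $1$ precisely when the relevant pair of inputs coincides:
\[
m(x,y,z) \;:=\; \bigl[(x\to y)\cdot(y\to x)\bigr]\cdot x \;\vee\; \bigl[(y\to z)\cdot(z\to y)\bigr]\cdot y \;\vee\; \bigl[(x\to z)\cdot(z\to x)\bigr]\cdot x .
\]
Every summand has the shape $[(u\to v)\cdot(v\to u)]\cdot w$ with $w\in\{u,v\}$. When $u=v$ the weight collapses to $1\cdot 1=1$ and the summand equals $w$; otherwise $(u\to v)\cdot(v\to u)$ is bounded above both by $u\to v$ and by $v\to u$ (via integrality and (a4)), so the summand is dominated by $w$ itself through the trivial bound, or by the other variable via (a2), according to which is needed. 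A brief case analysis of the six triples $(a,a,b),(a,b,a),(b,a,a)$ and their mates then shows that in each case exactly one summand contributes the majority value and the other two are bounded above by it.

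The main conceptual obstacle is precisely the lack of idempotency of $\cdot$: it is what prevents the distributive-lattice majority $xy\vee yz\vee xz$ from working and forces the equality-weight trick. Non-associativity of $\cdot$ is \emph{not} an obstacle here, since every multiplication in $p$ and $m$ is parenthesized explicitly and the only inequalities invoked---(a2), (a4), and integrality---do not use associativity. Once $p$ and $m$ are verified, arithmeticity of the variety follows.
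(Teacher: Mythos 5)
Your proposal is correct and follows the same overall strategy as the paper: exhibit an explicit Mal'cev term and an explicit majority term. Your Mal'cev term $(y\to x)z\vee(y\to z)x$ is, up to commutativity, exactly the paper's $p(x,y,z)=x(y\to z)\vee z(y\to x)$, and your verification via $x\to x=1$, neutrality of $1$ and (a2) is the intended one. Where you diverge is the majority term. You are right that the naive $xy\vee yz\vee xz$ fails without idempotency, and your ``equality-weight'' term $\bigl[(x\to y)(y\to x)\bigr]x\vee\bigl[(y\to z)(z\to y)\bigr]y\vee\bigl[(x\to z)(z\to x)\bigr]x$ does check out: each summand $[(u\to v)(v\to u)]w$ is below $w$ by integrality and (a4), below the other variable via $(u\to v)(v\to u)\le v\to u$ followed by (a2), and collapses to $w$ when $u=v$; the three majority identities then follow. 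The paper's term $m(x,y,z)=x(x\to y)\vee y(y\to z)\vee z(z\to x)$ achieves the same end more economically: the single factor $w(w\to v)$ is already a lower bound of both $w$ (by integrality and (a4)) and $v$ (by (a2)), and equals $w$ when $w\le v$, so no doubled weight is needed. Both witnesses are legitimate; yours costs an extra multiplication per summand and a slightly longer case analysis, but invokes exactly the same axioms, and correctly avoids any appeal to associativity.
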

\begin{proof} 
The corresponding Mal'cev terms are 
\[
m(x,y,z) := x(x \to y) \vee y(y\to z) \vee z(z\to x),
\]
and
\[p(x,y,z) := x(y \to z) \vee z(y \to x).
\]
\end{proof}

\end{document}